\newtheorem*{claim*}{Claim}
\newtheorem*{Lemma*}{Lemma}
\newtheorem*{problem*}{Problem}
\newtheorem{Main Conjecture}[theorem]{Main Conjecture}
\theoremstyle{remark}
\theoremstyle{plain}
\newtheorem*{MainThm}{Theorem}
\newtheorem*{corollaryA}{Corollary A}
\newtheorem*{corollaryB}{Corollary B}
\newtheorem*{corollaryC}{Corollary C}
\newcommand{\excise}[1]{}
\begin{document}
\pagestyle{plain}

\title{symmetric group representations and ${\mathbb Z}$}

\author{Anshul Adve}
\address{University Laboratory High School, Urbana, IL 61801}
\email{anshul.adve@gmail.com}
\author{Alexander Yong}
\address{Dept. of Mathematics, University of Illinois at
Urbana-Champaign, Urbana, IL 61801}
\email{ayong@uiuc.edu}


\date{June 30, 2017}
\maketitle

Let ${\mathfrak S}_n$ be the symmetric group of permutations of $\{1,2,\ldots,n\}$. A
\emph{representation}
is a homomorphism
$\rho:{\mathfrak S}_n\to {\sf GL}(V)$ where $V$ is a vector space
over ${\mathbb C}$. Equivalently, $V$ is an ${\mathfrak S}_n$-module under the action defined by
$\sigma\cdot v=\rho(\sigma)v$, for $\sigma\in {\mathfrak S}_n$ and $v\in V$. Then $\rho$
is \emph{irreducible} if there is no proper ${\mathfrak S}_n$-submodule of $V$. Conjugacy classes and hence irreducible
representations
of ${\mathfrak S}_n$ biject with ${\sf Par}(n)$, the partitions of size $n$.

Consider
three families of numbers from the theory:

\begin{itemize}
\item[(I)] The \emph{character} of $\rho$ is
\[\chi^{\rho}: \ {\mathfrak S}_n\to {\mathbb C}; \ \ \ \ \  \sigma   \mapsto {\rm tr}(\rho(\sigma)).\]
Textbooks focus on the case $V=V_{\lambda}$ is irreducible (because of Maschke's theorem).
Since characters are constant on
each conjugacy class $\mu$, one needs only $\chi^{\lambda}(\mu)$. These are
computed by the
Murnaghan-Nakayama rule (see below). More recent results include bounds on
(normalized) character evaluations \cite{Roichman, Feray}.
\item[(II)] If $V_{\lambda}$ and $V_{\mu}$ are irreducible ${\mathfrak S}_m$ and
${\mathfrak S}_n$-modules, respectively, then $V_{\lambda}\otimes V_{\mu}$ is an
irreducible ${\mathfrak S}_m\times {\mathfrak S}_n$-module. If $V_{\nu}$ is an irreducible
${\mathfrak S}_{m+n}$-representation,
    it restricts to a ${\mathfrak S}_m\times {\mathfrak S}_n$-representation $V_{\nu}\!\downarrow_{{\mathfrak S}_m\times
    {\mathfrak S}_n}^{{\mathfrak S}_{m+n}}$. The \emph{Littlewood-Richardson coefficient} is
\[c_{\lambda,\mu}^{\nu}=\text{multiplicity of $V_{\lambda}\otimes V_{\mu}$ in $V_{\nu}\!\downarrow_{{\mathfrak S}_m\times
    {\mathfrak S}_n}^{{\mathfrak S}_{m+n}}$}.\]
Many \emph{Littlewood-Richardson rules} are available to count
$c_{\lambda,\mu}^{\nu}$ \cite{ECII}.
\item[(III)] If $V_{\lambda}, V_{\mu}$ are ${\mathfrak S}_n$-modules then so is $V_{\lambda}\otimes V_{\mu}$. Hence we may write
\[V_{\lambda}\otimes V_{\mu}\cong \bigoplus_{\nu\in {\sf Par}(n)} V_{\nu}^{\oplus g_{\lambda,\mu,\nu}}.\]
 Here, $g_{\lambda,\mu,\nu}$ is the \emph{Kronecker coefficient}. One has an ${\mathfrak
S}_3$-symmetric but cancellative formula
$g_{\lambda,\mu,\nu}=\frac{1}{n!}\sum_{\sigma\in {\mathfrak S}_n}
\chi^{\lambda}(\sigma)\chi^{\mu}(\sigma)\chi^{\nu}(\sigma)$;
it is an old open problem to give a manifestly nonnegative combinatorial rule. The study of Kronecker coefficients has been
given new impetus from \emph{Geometric Complexity Theory}, an approach to
the ${\sf P}$ vs ${\sf NP}$ problem; see \cite{Jonah} and the references therein.
\end{itemize}

This note visits a rudimentary point.
While for finite groups, character
evaluations are 
algebraic integers, for ${\mathfrak S}_n$, in fact
$\chi^{\lambda}(\mu)\in {\mathbb Z}$. Moreover,
	by definition, $c_{\lambda,\mu}^{\nu},
g_{\lambda,\mu,\nu}\in {\mathbb Z}_{\geq 0}$.
We remark the three
converses hold.\footnote{inspired by
P.~Polo \cite{Polo}: every $f\in 1+q{\mathbb Z}_{\geq 0}[q]$ is a
Kazhdan-Lusztig
polynomial for some ${\mathfrak S}_n$} The proof uses standard facts,
but we are unaware of any specific reference
in the textbooks \cite{James, Fulton.Harris, ECII, Sagan}, or elsewhere.

\begin{MainThm}
Every integer is infinitely often an irreducible ${\mathfrak S}_n$-character evaluation. Every
nonnegative
integer is infinitely often a Littlewood-Richardson coefficient, and a Kronecker coefficient.
\end{MainThm}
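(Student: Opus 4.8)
The plan is to establish the three assertions independently, each by exhibiting one transparent infinite family.

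\textbf{Character evaluations.} I would use nothing beyond the standard module and its sign twist. The permutation module $\mathbb{C}^n$ has character $\sigma\mapsto\operatorname{fix}(\sigma)$, the number of fixed points of $\sigma$, and $\mathbb{C}^n\cong V_{(n)}\oplus V_{(n-1,1)}$, so $\chi^{(n-1,1)}(\sigma)=\operatorname{fix}(\sigma)-1$; tensoring with $V_{(1^n)}$ transposes the indexing shape, giving $\chi^{(2,1^{n-2})}(\sigma)=\operatorname{sgn}(\sigma)\bigl(\operatorname{fix}(\sigma)-1\bigr)$. Now fix a target integer $m$. If $m\ge-1$, then for every $n\ge m+3$ I pick $\sigma\in\mathfrak{S}_n$ with exactly $m+1$ fixed points and the other $n-m-1\ge2$ points forming a single cycle, so that $\chi^{(n-1,1)}(\sigma)=m$. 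If $m\le-2$, then for every $n\ge|m|+3$ with $n-|m|$ odd I pick $\sigma$ with exactly $|m|+1$ fixed points and the other $n-|m|-1$ (an even number $\ge2$) points forming a single cycle; then $\sigma$ is odd, so $\chi^{(2,1^{n-2})}(\sigma)=-|m|=m$. Each case covers infinitely many $n$, giving the first statement.

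\textbf{Littlewood--Richardson coefficients.} The value $0$ occurs infinitely often for trivial reasons (e.g.\ $c^\nu_{\lambda\mu}=0$ whenever $|\nu|=|\lambda|+|\mu|$ but $\nu$ has more parts than $\lambda$ and $\mu$ together). For $k\ge1$ I would take $\lambda=\delta_k:=(k,k-1,\dots,1)$, $\nu=\delta_{k+1}$, and $\mu=(k,1)$; here $|\nu|-|\lambda|=k+1=|\mu|$, and $\nu/\lambda$ is the antidiagonal strip of $k+1$ cells lying in pairwise distinct rows and pairwise distinct columns. Semistandardness is then automatic, so a Littlewood--Richardson filling of content $(k,1)$ is just a placement of $k$ ones and a single $2$ whose reading word is a lattice word, i.e.\ one in which the $2$ is not in the topmost cell; there are exactly $(k+1)-1=k$ such fillings, so $c^{\delta_{k+1}}_{\delta_k,(k,1)}=k$ (for $k=2$ this recovers $c^{(3,2,1)}_{(2,1),(2,1)}=2$). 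To produce infinitely many triples for each $k$, I prepend a common first row of length $N\ge k+1$: this merely shifts the strip down one row and leaves the count unchanged, so $c^{(N,\delta_{k+1})}_{(N,\delta_k),\,(k,1)}=k$ as well.

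\textbf{Kronecker coefficients.} By the projection formula $W\otimes\operatorname{Ind}_H^G U\cong\operatorname{Ind}_H^G(\operatorname{Res}_H W\otimes U)$ and the branching rule,
\[
V_\lambda\otimes V_{(n-1,1)}\;\cong\;\bigl(V_\lambda\otimes\operatorname{Ind}_{\mathfrak{S}_{n-1}}^{\mathfrak{S}_n}\mathbf{1}\bigr)\ominus V_\lambda\;\cong\;\bigl(\operatorname{Ind}_{\mathfrak{S}_{n-1}}^{\mathfrak{S}_n}\operatorname{Res}_{\mathfrak{S}_{n-1}}V_\lambda\bigr)\ominus V_\lambda ,
\]
with $\mathbf{1}$ the trivial $\mathfrak{S}_{n-1}$-module; a count of removable/addable corners shows $V_\lambda$ occurs in $\operatorname{Ind}\operatorname{Res}V_\lambda$ exactly once per removable corner of $\lambda$, i.e.\ once per distinct part size, whence $g_{\lambda,(n-1,1),\lambda}=\#\{\text{distinct part sizes of }\lambda\}-1$. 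For a target $k\ge0$ I take $\lambda=(N,N,k,k-1,\dots,1)$ with $N>k$, which has exactly $k+1$ distinct part sizes, so $g_{\lambda,(n-1,1),\lambda}=k$ for $n=|\lambda|=2N+\binom{k+1}{2}$; letting $N$ vary gives infinitely many $n$. The only step that needs genuine thought is the choice of skew shape in the Littlewood--Richardson case; once one recalls $\mathbb{C}^n\cong V_{(n)}\oplus V_{(n-1,1)}$ and the projection formula, the other two families are practically forced, which is presumably why this rudimentary fact seems not to be recorded.
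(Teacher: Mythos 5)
Your proposal is correct. For the character statement it is essentially the paper's argument: both exploit $\chi^{(n-1,1)}=\operatorname{fix}-1$ and its conjugate $\chi^{(2,1^{n-2})}$; you derive these from the permutation-module decomposition $\mathbb{C}^n\cong V_{(n)}\oplus V_{(n-1,1)}$ and the sign twist, whereas the paper reads them off the Murnaghan--Nakayama rule, and the paper phrases the conclusion as the achievable interval $[-(n-5),n-2]$ for fixed $n$ rather than chasing a single target value. For the Littlewood--Richardson and Kronecker statements your route is genuinely different. The paper proves a Kostka-coefficient lemma ($K_{(1+j,1^{k-1}),(j,1^k)}=k$) and then chains two ``special case'' embeddings: Kostka $\hookrightarrow$ Littlewood--Richardson (via the $\tau,\sigma$ substitution used by Narayanan) and Littlewood--Richardson $\hookrightarrow$ Kronecker (via Murnaghan's stable Kronecker coefficients $\overline{g_{\lambda,\mu,\nu}}=c_{\lambda,\mu}^{\nu}$ when $|\lambda|+|\mu|=|\nu|$). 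You instead compute directly: a lattice-word count of $k$ on the antidiagonal strip $\delta_{k+1}/\delta_k$ with content $(k,1)$, and the Pieri-type identity $g_{\lambda,(n-1,1),\lambda}=\#\{\text{distinct parts of }\lambda\}-1$ obtained from $V_{(n-1,1)}\cong\operatorname{Ind}\mathbf{1}\ominus\mathbf{1}$, the projection formula, and the branching rule. Both are sound. The paper's chain emphasizes exactly the ``one family is a special case of another'' theme that the rest of the note is about, and its Kostka lemma is reused later for Corollary C(a); your families are more self-contained, and in particular the Kronecker value is produced by an explicit finite formula rather than by invoking Murnaghan's $N\gg0$ stability theorem, which some readers may find more transparent.
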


\begin{corollaryA}
There exists a value-preserving multiset bijection between the
Littlewood-Richardson and Kronecker coefficients.
\end{corollaryA}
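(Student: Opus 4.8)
The plan is to obtain Corollary A as a direct cardinality consequence of the Main Theorem. Let $\mathcal{L}$ denote the multiset of all Littlewood-Richardson coefficients, i.e. the collection of values $c_{\lambda,\mu}^{\nu}$ as $(\lambda,\mu,\nu)$ ranges over all triples of partitions (with any fixed convention on sizes; values with $|\nu|\neq|\lambda|+|\mu|$ simply contribute $0$'s and do not affect the argument), and let $\mathcal{K}$ denote the multiset of all Kronecker coefficients $g_{\lambda,\mu,\nu}$ as $(\lambda,\mu,\nu)$ ranges over triples of partitions of a common size. First I would record the structural observation that each of these multisets is indexed by a countable set: $\bigcup_{n\ge 0}{\sf Par}(n)$ is countable, hence so is the set of finite tuples of partitions. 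Consequently each of $\mathcal{L}$, $\mathcal{K}$ is a countable multiset of elements of ${\mathbb Z}_{\ge 0}$, and in particular every value occurs in each of them at most countably often.

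Next I would invoke the Main Theorem. It asserts that every nonnegative integer occurs infinitely often as a Littlewood-Richardson coefficient and infinitely often as a Kronecker coefficient. Combined with the previous paragraph, this pins down the multiplicity functions exactly: writing $m_{\mathcal{L}}(v)$ and $m_{\mathcal{K}}(v)$ for the multiplicity of a value $v$ in $\mathcal{L}$ and in $\mathcal{K}$, we get $m_{\mathcal{L}}(v)=m_{\mathcal{K}}(v)=\aleph_0$ for every $v\in{\mathbb Z}_{\ge 0}$. The bijection is then built fiberwise: for each value $v$, the index set of tuples realizing $v$ in $\mathcal{L}$ and the index set realizing $v$ in $\mathcal{K}$ are both countably infinite, so choose any bijection between them; the disjoint union of these over all $v\in{\mathbb Z}_{\ge 0}$ is a bijection $\mathcal{L}\to\mathcal{K}$ which by construction carries each occurrence of $v$ to an occurrence of $v$, i.e. it is value-preserving.

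There is essentially no obstacle beyond the Main Theorem itself; the only points needing care are the two boundary-type checks that make the multiplicities literally equal rather than merely ``both large'': countability of the index sets (so no value occurs uncountably often, forcing $m\le\aleph_0$) and the ``infinitely often'' clause (so no value is missing or occurs only finitely, forcing $m\ge\aleph_0$). It is worth remarking that the honest content of the corollary is precisely this coincidence of multiplicity functions $v\mapsto m(v)$ for the two families of coefficients; one could state it that way instead, and the ``bijection'' is then the standard fact that two multisets with the same multiplicity function are isomorphic as multisets.
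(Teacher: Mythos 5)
Your argument is correct and is essentially the paper's own: both proofs observe that the Main Theorem together with countability of the index set forces each fiber ${\sf LR}_k$ and ${\sf Kron}_k$ to be countably infinite, and then assemble the bijection fiberwise. Your write-up merely makes explicit the two bounding observations (countable index set gives $\le\aleph_0$, the Theorem gives $\ge\aleph_0$) that the paper compresses into one sentence.
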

\begin{proof}
	Clearly, the Theorem implies that for each $k\in {\mathbb Z}_{\geq 0}$, the sets
\[{\sf LR}_k=\{(\lambda,\mu,\nu):c_{\lambda,\mu}^{\nu}=k\} \text{\ and \ } {\sf Kron}_k=\{(\lambda,\mu,\nu):g_{\lambda,\mu,\nu}=k\}\]
are countably infinite and thus
in bijection.
\end{proof}

Desirable would be a construction of an
injection ${\sf Kron}_k\hookrightarrow {\sf LR}_k$ for each $k\in {\mathbb Z}_{\geq 0}$ (avoiding the
countable axiom of choice). That should solve
the Kronecker problem in (III), by reduction
to (II). This we cannot do. However,
there has been success in this vein \cite{KMS} on another counting problem. See the Remark
at the end of this paper.\footnote{There is debate about the idiomatic meaning of \emph{counting rule} or
\emph{manifestly nonnegative combinatorial rule} etc.
Consider the
(adjusted) Fibonacci numbers ($1,1,2,3,5,8,13,\ldots$). A counting rule is that $F_n$
counts the number of $(1,2)$-lists
whose
sum is $n$. The recursive (and computationally efficient) description is
$F_n=F_{n-1}+F_{n-2}$ ($n\geq 2$) where $F_0=F_1=1$. Construct a
binary tree ${\mathcal T}_n$ with root labelled $F_n$; each node of label $F_i$ has a
left child
$F_{i-1}$ and right child $F_{i-2}$. Leaves of ${\mathcal T}_n$ are labelled $F_1$ or
$F_0$. $F_n$
counts the number of leaves of ${\mathcal T}_n$. The latter
description restates the recurrence and is not, \emph{per se}, a counting rule.}

\noindent
\emph{Proof of the Theorem:}
The
\emph{Murnaghan-Nakayama rule}
states
$\chi^{\lambda}(\mu)=\sum_{T} (-1)^{{\rm ht}(T)}$,
where
$T$ is a tableaux of shape $\lambda$ with $\mu_i$ many labels $i$, the entries
are weakly increasing along rows and columns, and the labels $i$ form a connected skew shape $T_i$
with no $2\times 2$ subsquare;
${\rm{ht}}(T)$ is the sum of the heights of each $T_i$, i.e., one less than the number of rows of $T_i$.

We sharpen the assertion about $\chi^{\lambda}(\mu)$.
In particular, for a given $n$, we consider the intervals of consecutive integers achievable as character evaluations for $\mathfrak{S}_n$.
From the rule, the character of the \emph{defining
representation}
satisfies
$\chi^{(n-1,1)}(\mu)=\#(\text{$1$'s in $\mu$})-1$
(see also \cite[Lemma~6.9]{James}).
Hence, $\chi^{(n-1,1)}$ takes the values $[0,n-2]$.
Similarly, $\chi^{(2,1^{n-2})}$ achieves an interval of negative integers:
Take $k \in [1,n-5] \cup \{n-3\}$.
If $k\not\equiv n \text{ \ mod $2$}$, let $\mu = (n-k-1,1^{k+1})$.
Otherwise,
if $k\equiv n \text{\ mod $2$}$, let $\mu = (n-k-4,3,1^{k+1})$.
Note that if $k=n-6$, let $\mu$ be these parts in  decreasing order.
In either case, the rule shows
$\chi^{(2,1^{n-2})}(\mu) = -k$.
Thus, for $n \ge 5$, $[-(n-5),n-2]\subseteq\{\chi^\lambda(\mu) :
\lambda,\mu \in {\sf Par}(n)\}$.
Taking $n\to \infty$ implies the statement regarding character evaluations.

The \emph{Kostka coefficient} $K_{\lambda,\mu}$ is the number of
\emph{semistandard Young tableaux} of shape $\lambda$ with content $\mu$, i.e., fillings of $\lambda$ with $\mu_i$ many $i$'s such that rows
are weakly increasing and columns are strictly increasing.
\begin{Lemma*}
Every nonnegative integer is infinitely often a Kostka coefficient.
\end{Lemma*}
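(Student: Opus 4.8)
The plan is to exhibit, for each fixed $k\in{\mathbb Z}_{\geq 0}$, an explicit infinite family of pairs $(\lambda,\mu)$ with $K_{\lambda,\mu}=k$. A convenient choice reuses the shape $\lambda=(n-1,1)$ of the defining representation analyzed just above: tested against a content consisting of one large part together with $k$ singletons, its Kostka numbers are completely transparent. Concretely, I would fix $k$ and, for every $n\ge k+1$, take
\[
\lambda=(n-1,1),\qquad \mu=(n-k,1^k),
\]
and claim $K_{(n-1,1),(n-k,1^k)}=k$.

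To verify this I would argue straight from the definition of semistandard Young tableaux. First, all $n-k$ entries equal to $1$ must occupy the first row: columns are strictly increasing, so no column holds two $1$'s, and since $1$ is the smallest label it sits at the top of its column. As $\lambda$ has only two rows and its second row is the single box directly below the corner, that box carries some label $>1$, hence one of $2,3,\ldots,k+1$, giving $k$ choices. Once that box is chosen, the first row is forced --- it is the $n-k$ ones followed by the remaining $k-1$ of the labels $\{2,\ldots,k+1\}$ in increasing order, the only weakly increasing arrangement --- and column-strictness is automatic, since only the first column has two boxes and its entries are $1<(\text{something}\ge 2)$. Thus there are exactly $k$ such tableaux. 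Distinct $n$ give distinct pairs, so $k$ is realized infinitely often; the case $k=0$ is subsumed (then $\mu=(n)$ and nothing can legally go in the second-row box).

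I do not expect a genuine obstacle: the only real content is choosing a family whose Kostka numbers one can both evaluate by inspection and tune to hit \emph{every} value. The traps to avoid are that two-part contents $\mu=(p,q)$ always yield $K_{\lambda,\mu}\in\{0,1\}$, and that counts of standard tableaux (content $(1^n)$) are polynomial in $n$ and hence skip most integers; the device of "$k$ singletons against a fixed shape with essentially one free box" sidesteps both. If one wants a cleaner statement, the same computation gives $K_{(n-1,1),\mu}=\ell(\mu)-1$ for every partition $\mu$ of $n$, and the Lemma follows by taking any $\mu$ with $\ell(\mu)=k+1$.
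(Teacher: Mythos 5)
Your proof is correct and takes essentially the same approach as the paper: exhibit an explicit infinite family of pairs whose Kostka number equals $k$, verified by a direct count of semistandard tableaux. The paper uses $K_{(1+j,1^{k-1}),(j,1^k)}=k$ for $j\ge 1$ (a hook $\lambda$ whose leg length encodes $k$), whereas you fix the two-row hook $\lambda=(n-1,1)$ and let the content $\mu=(n-k,1^k)$ carry the dependence on $k$; both are equally transparent, and your closing observation $K_{(n-1,1),\mu}=\ell(\mu)-1$ is a clean uniform way to package it.
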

\begin{proof}
Clearly,
$K_{(1+j,1^{k-1}),(j,1^k)}=k \text{\ \  for $j\geq 1$.}$
The lemma then follows.
\end{proof}

The Littlewood-Richardson coefficient
claim holds since it is long known that Kostka coefficients are a special case.
To be specific,
$K_{\lambda,\mu}=c_{\sigma,\lambda}^{\tau}$
where
\[\tau_i=\mu_i+\mu_{i+1}+\cdots, \ i=1,2,\ldots,\ell(\mu), \text{\ and }\]
\[\sigma_i=\mu_{i+1}+\mu_{i+2}+\cdots, \  i=1,2,\ldots,\ell(\mu)-1.\footnote{This reduction is used
by
H.~Narayanan \cite{Nara} to show computing $c_{\lambda,\mu}^{\nu}$ is a $\#{\sf P}$ problem.}\]

For $\lambda=(\lambda_1,\lambda_2,\ldots)$, let
$\lambda[N]:=(N-|\lambda|,\lambda_1,\lambda_2,\ldots)$. F.~D.~Murnaghan \cite{Murn} proved that
for an
integer $N\gg 0$,
$\chi^{\lambda[N]}\otimes \chi^{\mu[N]}=\sum_{\nu}
\overline{g_{\lambda,\mu,\nu}}
\chi^{\nu[N]}$.
The $\overline{g_{\lambda,\mu,\nu}}$ are called \emph{stable Kronecker
coefficients} and are evidently a special case of Kronecker coefficients. When
$|\lambda|+|\mu|=|\nu|$ one has
${\overline{g_{\lambda,\mu,\nu}}}=c_{\lambda,\mu}^{\nu}$.
Hence one infers the
Kronecker coefficient assertion.
\qed

When, e.g., $n=25$, all of
$[-853,949]$ appear as some $\chi^{\lambda}(\mu)$, but the proof merely guarantees $[-20,23]$.
Let $\ell_n$ be the maximum size of an interval of consecutive character evaluations for ${\mathfrak S}_n$. Trivially,
the results of \cite{Roichman, Feray} imply upper bounds for $\ell_n$. Can
one
prove better upper or lower bounds for $\ell_n$?

Let ${\sf A}_n$ be the \emph{alternating group} of even permutations in ${\mathfrak S}_n$.
Sources about the representation theory of ${\sf A}_n$ include
\cite[Section~2.5]{JamesKerber} and \cite[Section~5.1]{Fulton.Harris}.
Character evaluations of ${\sf A}_n$ are not always integral, however:
\begin{corollaryB}
Every integer appears infinitely often as an ${\sf A}_n$-irreducible character evaluation.
\end{corollaryB}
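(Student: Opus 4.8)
The plan is to reduce the statement for $\mathsf{A}_n$ to the already-established result for $\mathfrak{S}_n$ by exploiting the standard relationship between the irreducible characters of $\mathfrak{S}_n$ and those of $\mathsf{A}_n$ under restriction. Recall the basic dichotomy: if $\lambda \neq \lambda'$ (i.e. $\lambda$ is not self-conjugate), then $V_\lambda$ and $V_{\lambda'}$ restrict to the \emph{same} irreducible $\mathsf{A}_n$-module, whose character is $\tfrac{1}{2}(\chi^\lambda + \chi^{\lambda'})$ restricted to $\mathsf{A}_n$; whereas if $\lambda = \lambda'$, then $V_\lambda\!\downarrow_{\mathsf{A}_n}$ splits into two conjugate irreducibles whose characters agree with $\chi^\lambda$ away from a small set of ``split'' classes. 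The first family is the convenient one: for a non-self-conjugate $\lambda$ and an even permutation $\sigma$ lying in a conjugacy class of $\mathfrak{S}_n$ that does \emph{not} split in $\mathsf{A}_n$, the conjugate partition $\lambda'$ indexes the same $\mathfrak{S}_n$-class, so the $\mathsf{A}_n$-character value there is exactly $\chi^\lambda(\sigma)$.

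So the key step is: rerun the construction from the proof of the Main Theorem, but arrange that the shape $\lambda$ is not self-conjugate and the cycle type $\mu$ is even and non-splitting. For the defining representation, $\lambda = (n-1,1)$ is self-conjugate only for $n=2$, so it is available for all $n\ge 3$; its character value on $\mu$ is $\#(\text{$1$'s in }\mu) - 1$. We need $\mu$ to be a partition of $n$ into an even permutation (an even number of even parts) whose class does not split in $\mathsf{A}_n$; a class of $\mathfrak{S}_n$ splits in $\mathsf{A}_n$ precisely when all parts of $\mu$ are odd and distinct, so it suffices to include one repeated part or one even part (compensating parity with a second even part or by adjusting the count of $1$'s). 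For instance, to realize the value $m-1$ one wants $m$ ones; pad with a part equal to $2$ twice, or with a single part $3$ together with a $1$-count of the right parity, etc. Running over all large $n$ then shows every nonnegative integer $m-1$ (hence every nonnegative integer) occurs infinitely often. To capture negative integers, repeat the $\chi^{(2,1^{n-2})}$ argument from the Main Theorem's proof: note $(2,1^{n-2})$ is self-conjugate only in small cases, so for $n$ large it is non-self-conjugate, and the cycle types $\mu$ used there — of the form $(n-k-1,1^{k+1})$ or $(n-k-4,3,1^{k+1})$ — already contain repeated $1$'s (since $k+1\ge 2$), hence are non-splitting; one only needs to intersect with the parity constraint defining an even permutation, which the case split on $k \equiv n \bmod 2$ already handles. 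Thus the same interval $[-(n-5), n-2]$ of values is achieved by genuine $\mathsf{A}_n$-irreducible characters, and letting $n\to\infty$ finishes the proof.

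The main obstacle is bookkeeping rather than conceptual: one must simultaneously satisfy three constraints on $\mu$ — that it is a partition of $n$ with a prescribed number of $1$'s (to pin the target character value via the Murnaghan--Nakayama computation), that it corresponds to an even permutation, and that its $\mathfrak{S}_n$-conjugacy class does not split in $\mathsf{A}_n$ — while keeping $\lambda$ non-self-conjugate. The non-splitting condition is the easiest to secure (any repeated part does it, and our $\mu$'s already have repeated $1$'s for the relevant ranges), and non-self-conjugacy of $(n-1,1)$ and $(2,1^{n-2})$ holds automatically for $n$ large; the only real care is the parity adjustment, which costs at most a bounded perturbation of $n$ and so does not affect the ``infinitely often'' conclusion. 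I would therefore present the proof as: invoke the restriction dictionary to reduce to finding such $(\lambda,\mu)$ pairs, then quote verbatim (with the minor parity tweaks) the families from the Main Theorem's proof, and conclude by the $n\to\infty$ limit exactly as before.
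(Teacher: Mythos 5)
Your overall strategy matches the paper's: restrict from $\mathfrak{S}_n$ to $\mathsf{A}_n$ using non-self-conjugate shapes $\lambda$ and non-splitting cycle types $\mu$, so that the $\mathsf{A}_n$-character value equals $\chi^{\lambda}(\mu)$, and then reuse the families from the Main Theorem's proof. But your treatment of the negative values contains a concrete error. You assert that ``the case split on $k \equiv n \bmod 2$ already handles'' the requirement that $\mu$ be an even permutation. It does not. For $\mu = (n-k-1,1^{k+1})$ one has $\ell(\mu)=k+2$, so $\mathrm{sgn}(\mu)=(-1)^{n-\ell(\mu)}=(-1)^{n-k}$, and this case is used exactly when $n-k$ is odd; hence $\mathrm{sgn}(\mu)=-1$. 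Likewise for $\mu = (n-k-4,3,1^{k+1})$ one has $\ell(\mu)=k+3$, $\mathrm{sgn}(\mu)=(-1)^{n-k-1}$, used when $n-k$ is even, so again $\mathrm{sgn}(\mu)=-1$. Every $\mu$ used in the $\chi^{(2,1^{n-2})}$ computation is an \emph{odd} permutation and therefore does not label an $\mathsf{A}_n$-conjugacy class at all. (As a side note, $(n-1,1)$ is self-conjugate at $n=3$, not $n=2$.)

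More importantly, the ``bounded parity perturbation'' you propose cannot repair this. Since $V_{\lambda'}\cong V_{\lambda}\otimes\mathrm{sgn}$, one has $\chi^{\lambda'}(\mu)=\mathrm{sgn}(\mu)\,\chi^{\lambda}(\mu)$, so on every even $\mu$, $\chi^{(2,1^{n-2})}(\mu)=\chi^{(n-1,1)}(\mu)=\#(\text{$1$'s in }\mu)-1\ge -1$. Equivalently, $V_{(2,1^{n-2})}\!\downarrow_{\mathsf{A}_n}\cong V_{(n-1,1)}\!\downarrow_{\mathsf{A}_n}$, so the two hook shapes give the same $\mathsf{A}_n$-character, which is bounded below by $-1$. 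Producing arbitrarily negative $\mathsf{A}_n$-character evaluations therefore requires a genuinely different choice of shapes (or passing to split characters of self-conjugate shapes); it cannot be extracted from the pair $(n-1,1)$, $(2,1^{n-2})$ by adjusting the cycle type. The paper's own one-line ``repeat the Theorem's argument'' is silent on the evenness of $\mu$ and so does not flag this obstruction; your writeup makes the parity claim explicit, and explicit it is incorrect.
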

\begin{proof}
Let
$\psi^{\lambda}=\chi^{\lambda}\!\downarrow^{{\mathfrak S}_n}_{{\sf A}_n}$
be the
character of the restriction
of the ${\mathfrak S}_n$-irreducible $V_{\lambda}$. If $\mu$ is not a partition with distinct odd parts
then the conjugacy class in ${\mathfrak S}_n$ of cycle type $\mu$ is also a conjugacy class of ${\sf A}_n$. If $\lambda$ is not a self-conjugate partition, the restriction is an
${\sf A}_n$-irreducible and also
$\psi^{\lambda}(\mu)=\chi^{\lambda}(\mu)$.
Repeat the Theorem's character
argument, since for $n\geq 4$ neither $\lambda$ used is self-conjugate, and since
for $k\geq1$, $\mu$ has equal parts.
\end{proof}
\excise{
We tabulated the largest consecutive interval that appears among the character evaluations for $n=1,2,\ldots,25$:

\begin{table}[h]
\centering
\begin{tabular}{|c|c|c|c|c|c|c|c|c|c|}
\hline
$n$ & $1$ & $2$ & $3$ & $4$ & $5$ & $6$ & $7$ & $8$ & $9$\\
\hline
\text{Interval} & $[-1,2]$ & $[-2,2]$ & $[-2,3]$ & $[-2,4]$ & $[-3,3]$ & $[-4,4]$ & $[-7,7]$ &
$[-7,11]$ & $[-7,12]$
\\
\hline
\end{tabular}
\end{table}

\begin{table}[h]
\ \ \ \ \ \ \ \ \ \ \ \ \ \ \ \  \ \ \ \ \ \ \ \ \ \ \ \ \ \ \ \ \ \ \ \ \
\begin{tabular}{|c|c|c|c|c|c|c|c|c|}
\hline
$10$ & $11$ & $12$ & $13$ & $14$ & $15$ & $16$ & $17$ & $18$\\
\hline
$[-11,12]$ & $[-7,19]$ & $[-17,17]$ & $[-19,23]$ & $[-31,31]$ & $[-31,47]$ & $[-53,59]$ & $[-59,73]$ & $[-101,123]$
\\
\hline
\end{tabular}
\end{table}

\begin{table}[h]
\ \ \ \ \ \ \ \ \ \ \ \ \ \ \ \  \ \ \ \ \ \ \ \ \ \ \ \ \ \ \ \ \ \ \ \ \
\begin{tabular}{|c|c|c|c|c|c|c|}
\hline
$19$ & $20$ & $21$ & $22$ & $23$ & $24$ & $25$ \\
\hline
$[-137,193]$ & $[-157,157]$ & $[-271,263]$ & $[-403,403]$ & $[-397,443]$ & $[-521,557]$ & $[-853,949]$\\
\hline
\end{tabular}
\end{table}
The largest interval guaranteed by our argument is $[-(n-5),n-2]$, which is clearly small compared to the data. Can one prove larger intervals or in any case better understand the values that can occur for given $n$.}

\noindent
{\bf Definition.} For a countable
indexing set ${\sf A}$,
a family of nonnegative integers
$(a_{\alpha})_{\alpha\in {\sf A}}$ is \emph{entire} if
every $k\in
{\mathbb Z}_{\geq 0}$ appears infinitely often.

\medskip
Many of the nonnegative integers arising in
algebraic combinatorics are entire. For example, this is true for the theory of \emph{Schubert polynomials}
(we refer to \cite{Manivel} for references).
If $w_0\in {\mathfrak S}_n$ is the longest permutation
then ${\mathbb S}_{w_0}(x_1,\ldots,x_n)=x_1^{n-1}x_2^{n-2}\cdots x_{n-1}$. If $w\neq w_0$, $w(i)<w(i+1)$ for some $i$. Then
${\mathbb S}_w(x_1,\ldots x_n)=\partial_i {\mathbb S}_{ws_i}(x_1,\ldots,x_n)$
where $\partial_i=\frac{f-s_i(f)}{x_i-x_{i+1}}$ and $s_i$ is the simple transposition
interchanging $i,i+1$. Nontrivially, each ${\mathbb S}_w\in {\mathbb Z}_{\geq 0}[x_1,x_2,\ldots]$.
Moreover, ${\mathbb S}_w={\mathbb S}_{w\times 1}$ where $w\times 1\in {\mathfrak S}_{n+1}$ is the usual image of $w\in \mathfrak{S}_n$.
Thus we can discuss ${\mathbb S}_w$ for $w\in {\mathfrak S}_{\infty}$; these form a ${\mathbb Z}$-linear basis of
${\mathbb Z}[x_1,x_2,\ldots]$. The \emph{Schubert structure constants}
$C_{u,v}^w:=[{\mathbb S}_w]{\mathbb S}_u {\mathbb S}_v\in {\mathbb Z}_{\geq 0}$
for geometric reasons. The \emph{Stanley symmetric function} is defined by
$F_w=\lim_{m\to\infty} {\mathbb S}_{1^m\times w} \in {\mathbb Z}[[x_1,x_2,\ldots]]$;
here $1^m\times w\in {\mathfrak S}_{m+n}$ sets $1^m\times w(i)$ equal to $i$ if $1\leq i\leq m$ and equal to $w(i-m+1)+m$
otherwise.  $F_w$ is Schur-nonnegative.

\begin{corollaryC}
These families of nonnegative integers are entire:
\begin{itemize}
\item[(a)] The coefficients of monomials in Schubert polynomials.
\item[(b)] The Schubert structure constants.
\item[(c)] The coefficients of Schur functions in Stanley symmetric functions.
\end{itemize}
\end{corollaryC}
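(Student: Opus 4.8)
The plan is to observe that all three families contain, as subfamilies, the Kostka numbers (for part (a)) or the Littlewood-Richardson coefficients (for parts (b) and (c)); the Lemma and the Theorem then supply every nonnegative value, while the identity $\mathbb{S}_w=\mathbb{S}_{w\times 1}$ --- equivalently, the freedom to enlarge the ambient symmetric group --- upgrades each single occurrence to infinitely many. For part (a), I would use the classical fact that a Schur polynomial in finitely many variables is a Schubert polynomial: given a partition $\lambda$ and any $d\geq\ell(\lambda)$, there is a Grassmannian permutation $w(\lambda,d)\in\mathfrak{S}_\infty$ (unique descent at position $d$, associated partition $\lambda$) with $\mathbb{S}_{w(\lambda,d)}=s_\lambda(x_1,\ldots,x_d)$, and the coefficient of the monomial $x^{\mu}$ in $s_\lambda(x_1,\ldots,x_d)$, for a partition $\mu$ with $\ell(\mu)\leq d$, is the Kostka number $K_{\lambda,\mu}$. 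Since $K_{(1+j,1^{k-1}),(j,1^k)}=k$ for every $j\geq 1$ by the Lemma, choosing $d$ large enough to contain both $\lambda$ and $\mu$ exhibits each $k\geq 1$ as a monomial coefficient of a Schubert polynomial, and letting $j$ and $d$ vary gives infinitely many occurrences; the value $0$ occurs infinitely often for trivial reasons. Hence family (a) is entire.

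For part (b), take $u=w(\lambda,d)$ and $v=w(\mu,d)$ Grassmannian with a common descent $d\geq\max(\ell(\lambda),\ell(\mu),\ell(\nu))$. Specializing $x_{d+1}=x_{d+2}=\cdots=0$ in the stable expansion $s_\lambda s_\mu=\sum_\kappa c_{\lambda,\mu}^{\kappa}\,s_\kappa$ annihilates exactly the terms with $\ell(\kappa)>d$, so $\mathbb{S}_u\mathbb{S}_v=\sum_{\ell(\kappa)\leq d}c_{\lambda,\mu}^{\kappa}\,\mathbb{S}_{w(\kappa,d)}$; comparing with the expansion in the Schubert basis gives $C_{u,v}^{w(\nu,d)}=c_{\lambda,\mu}^{\nu}$. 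By the Theorem every $k\geq 0$ is a Littlewood-Richardson coefficient for infinitely many triples $(\lambda,\mu,\nu)$, and for each such triple every sufficiently large $d$ yields a distinct Schubert structure constant equal to $k$. Hence family (b) is entire.

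For part (c), a Grassmannian permutation satisfies $F_{w(\lambda,d)}=s_\lambda$. I would combine this with the standard multiplicativity of Stanley symmetric functions under direct sums, $F_{u\oplus v}=F_uF_v$ (see, e.g., \cite{Manivel}): for $w=w(\lambda,d)\oplus w(\mu,e)$ one has $F_w=s_\lambda s_\mu=\sum_\nu c_{\lambda,\mu}^{\nu}\,s_\nu$, so the coefficient of $s_\nu$ in $F_w$ is $c_{\lambda,\mu}^{\nu}$, and the Theorem finishes exactly as in part (b), with $d$ varying to supply infinitely many $w$. Alternatively, one may quote that every skew Schur function $s_{\lambda/\mu}$ equals $F_w$ for a suitable $321$-avoiding permutation $w$ and then invoke the skew Littlewood-Richardson rule $s_{\lambda/\mu}=\sum_\nu c_{\mu,\nu}^{\lambda}\,s_\nu$.

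None of the steps is deep: the substance is a dictionary --- Schur polynomials are Grassmannian Schubert polynomials, products of Grassmannian Schubert classes are Littlewood-Richardson coefficients, and Stanley symmetric functions of Grassmannian permutations are single Schur functions --- applied on top of the Lemma and the Theorem. The only point needing a little care is the phrase \emph{infinitely often}, and it is handled uniformly by $\mathbb{S}_w=\mathbb{S}_{w\times 1}$, i.e., by enlarging $d$; I do not expect a genuine obstacle.
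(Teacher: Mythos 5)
Your proposal is correct and essentially reproduces the paper's argument: (a) via Grassmannian Schubert polynomials being Schur polynomials whose monomial coefficients are Kostka numbers, (b) via structure constants of Grassmannian permutations with a common descent being Littlewood--Richardson coefficients, and (c) via Stanley symmetric functions of $321$-avoiding permutations. The only cosmetic difference is in (c), where you lead with the direct-sum multiplicativity $F_{u\oplus v}=F_uF_v$ rather than the paper's skew Schur expansion $F_w=s_{\nu/\lambda}=\sum_\mu c_{\lambda,\mu}^{\nu}s_\mu$, but since $w(\lambda,d)\oplus w(\mu,e)$ is itself $321$-avoiding and $s_\lambda s_\mu$ is a (disconnected) skew Schur function, this is a sub-case of the paper's route, and you also note the paper's route as an alternative.
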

\begin{proof} (a) is true by the Lemma since when $w$ is \emph{Grassmannian} 
(has at most one descent),
${\mathbb S}_w(x_1,\ldots,x_n)$ is a
Schur polynomial $s_{\lambda}$.
When $u,v$ and $w$ are Grassmannian with descent position $d$, then $C_{u,v}^w$ is a Littlewood-Richardson coefficient so the
Theorem implies (b).
Finally, when $w$ is $321$-avoiding (i.e., there does not exist indices $i<j<k$ such that $w(i)>w(j)>w(k)$),
$F_w=s_{\nu/\lambda}=\sum_{\mu}c_{\lambda,\mu}^{\nu} s_{\mu}$
is a skew Schur
function. Hence, here the coefficient (c) is $c_{\lambda,\mu}^{\nu}$ and we apply the Theorem.
\end{proof}

Abstractly, all entire families are mutually in
value-preserving bijection. However, for Corollary~C one
can say more: (a) and (c) are a special cases of (b) (see \cite{Bergeron.Sottile} and \cite{Yong:MRL}). Can one construct a ``wrong way map'' (as in ${\mathbb Q}\hookrightarrow {\mathbb N}$)
for either (b)$\hookrightarrow$(a) or (b)$\hookrightarrow$(c)
(thereby finding a rule for $C_{u,v}^w$)?  A special case indicating the difficulty is:

\begin{problem*}
Construct an explicit value-preserving injection between
Littlewood-Richardson and Kostka coefficients.
\end{problem*}

\noindent
{\bf Remark.} Finding a wrong way map has solved a significant counting
rule problem concerning
A.~Buch-W.~Fulton's \emph{quiver coefficients}. These arise in the study of degeneracy
loci of vector bundles over a smooth projective algebraic variety. It was conjectured by
those two authors that these integers are nonnegative, with a conjectural counting
rule. Also, A.~Buch showed that special cases of the quiver
coefficients are
the numbers from (c) above. The
resolution of this problem,
due to A.~Knutson-E.~Miller-M.~Shimozono, came by establishing the \emph{opposite}:
quiver coefficients are special cases of the well-understood numbers (c).
We refer to the solution \cite{KMS} for background and references.

 \subsection*{Acknowledgements} We thank Ezra Miller and Bernd Sturmfels for
stimulating remarks (over a dozen years ago) concerning \cite{KMS, Yong:MRL}. We
are grateful to Alexander Miller suggesting the character evaluation
argument, for data, as well as many other helpful comments.
We thank
Nantel Bergeron, Colleen Robichaux, Hugh Thomas and Mike Zabrocki for informative
discussions. AY
was partially
supported by an NSF grant.

\end{document}